\newtheorem{theorem}{Theorem}[section]
\newtheorem{example}{Example}[section]
\newtheorem{lemma}[theorem]{Lemma}
\renewcommand{\SS}{\ensuremath{\mathcal{S}}}
\newcommand{\K}{\ensuremath{\mathcal{K}}}
\newcommand{\eff}{\ensuremath{\mathbb{F}}}
\title {Combinatorial Repairability for Threshold Schemes\thanks{The authors' research is supported by  NSERC discovery grants.}}
\author[1]{Douglas~R.~Stinson}
\author[2]{Ruizhong Wei}
\affil[1]{David R.\ Cheriton School of Computer Science, University of Waterloo,
Waterloo, Ontario, N2L 3G1, Canada}
\affil[2]{Department of Computer Science, Lakehead University, Thunder Bay, Ontario, P7B 5E1, Canada}
\date{\today}
\begin{document}
\maketitle

\begin{abstract}
In this paper, we consider methods whereby a subset of players in a $(k,n)$-threshold 
scheme can ``repair'' another player's share in the event that their share has been lost or corrupted.
This will take place without the participation of the dealer who set up the scheme.
The repairing protocol should not compromise the (unconditional) security of the threshold scheme, and
it should be efficient, where efficiency is measured in terms of  the amount of information 
exchanged during the repairing process. We study two approaches to repairing. The first method is
based on the ``enrollment protocol'' from \cite{NSG} which was originally developed to add a new
player to a threshold scheme (without the participation of the dealer) after the scheme was set up.
The second method distributes ``multiple shares'' to each player, as defined by a suitable
combinatorial 
design. This method results in larger shares, but lower communication complexity, as compared to the first method.
\end{abstract}

\section{Introduction}

Suppose that $k_1, k_2$ and $n$ are positive integers
such that  $k_1 < k_2 \leq n$. 
Informally, a \emph{$(k_1,k_2,n)$-ramp scheme} is 
a method whereby a {\it dealer} chooses a {\it secret} and distributes a {\it share} to 
each of $n$ {\it players} such that the following two properties
are satisfied:
\begin{description}
\item[reconstruction] Any subset of $k_2$ players can compute the
secret from the shares that they collectively hold, and 
\item[secrecy] No subset of $k_1$ players can determine any information about 
the secret.
\end{description}
{We call $k_1$ and $k_2$ the {\it lower threshold} and {\it upper threshold}
of the scheme, respectively.}
When $k_2 = k_1 + 1 = k$, a ramp scheme is known as a \emph{$(k,n)$-threshold scheme}.

In this paper, we are only interested in schemes that are {\it unconditionally secure}.
That is, all security results are valid against adversaries with unlimited computational power.
%Ramp schemes were {defined} by Blakley and Meadows \cite{BM}
%and much basic information about ramp schemes can be found in \cite{BDV,JM,OK}.
%Ramp schemes have found numerous applications in cryptography and information security
%over the years, including
%broadcast encryption \cite{SW}, secure multiparty computation \cite{CC}
%and error decodable secret sharing \cite{MPS}.

The original motivation for ramp schemes (as opposed to threshold schemes)
is that ramp schemes permit larger secrets be shared
for a given share size. The efficiency of secret sharing is often measured in terms of the
\emph{information rate} of the scheme, which is defined to be the ratio
$\rho = \log _2 | \K | / \log _2 |  \SS |$
(where $\SS$ is the set of all possible shares and $\K$ is the set of all
possible secrets). That is, the information rate is the ratio of the size of the secret to the
size of a share.

For a threshold scheme, a fundamental result
states that $\rho \leq 1$. 
However, there are constructions %(see \cite{BDV,JM,OK}) 
for ramp schemes
where the (optimal) information
rate is $k_2 - k_1$; for non-threshold ramp schemes, this quantity exceeds one.

We briefly describe a standard construction for ramp schemes with optimal
information rate (see, e.g., \cite{OK}). In the threshold case, this is just the classical
Shamir threshold scheme \cite{Shamir}. The construction takes place over a finite field $\eff_Q$, where $Q \geq n+1$.
\begin{enumerate}
\item In the {\bf Initialization Phase},  
the dealer, denoted by $D$, chooses $n$ distinct, 
non-zero elements of $\eff_Q$, denoted $x_i$, 
$1 \leq i \leq n $. The values $x_i$ are public.
For $1 \leq i \leq n$, $D$ gives the value $x_i$ to $P_i$.  
\item Let $\lambda = k_2 - k_1$. In the {\bf Share Distribution} phase, 
$D$ chooses a secret \[K = (a_0, \dots, a_{\lambda-1}) \in (\eff_Q)^{\lambda}.\]
Then $D$ secretly chooses (independently and uniformly at random)
 $a_{\lambda}, \dots , a_{k_2-1} \in \eff_Q$.
Next, for $1 \leq i \leq n$, 
$D$ computes $y_i= a(x_i)$, where 
\[a(x) = \sum _{j=0}^{k_2-1} a_j\, x^j.\]
Finally, for $1 \leq i \leq n$, $D$
constructs the share $y_i= a(x_i)$ and gives it to $P_i$.
\end{enumerate}

Reconstruction is easily accomplished using the Lagrange interpolation formula
(see, e.g., \cite[\S13.1]{CTAP}).

\subsection{Share Repairability}

The problem of {\it share repairability} has been considered by 
several authors in recent years (see, for example, \cite{GLF}). We will mainly consider repairability
of threshold schemes. The problem setting is that a 
certain player $P_{\ell}$ (in a $(k,n)$-threshold scheme, say) loses their share. The goal is 
to find a ``secure'' protocol involving $P_{\ell}$ and a subset of the other
players that allows the missing share $y_{\ell}$ to be reconstructed.
(Of course the dealer could simply re-send the share to $P_{\ell}$, 
but we are considering a setting where the dealer is
no longer present in the scheme after the initial setup.)
In general, we will assume secure pairwise channels linking
pairs of players. %This allows players to send messages to each other
%without any fear of being eavesdropped.

We consider protocols that operate in two phases:

\begin{enumerate}
\item  In the {\bf message exchange phase}, a certain subset of $d$ players (not including $P_{\ell}$)
exchange messages among themselves. The integer $d$ is called the {\it repairing degree.}
We will only consider protocols where each player sends at most one message to any other player,
and every message is sent at the same time.

\item In the {\bf repairing phase}, these same $d$ players each send a message to $P_{\ell}$.
The messages received by $P_{\ell}$ allow $P_{\ell}$'s share to be
reconstructed. Some of the protocols we study only require a repairing phase.
\end{enumerate}

We note that $d \geq k$ is an obvious necessary condition for the existence of such a 
scheme. This is seen as follows. Suppose $k-1$ players could repair another player's
share. Then these $k-1$ players would have $k$ shares, which would enable them
to reconstruct the secret. This is of course not allowed in a $(k,n)$-threshold scheme.

We have to consider what it means for a protocol of this type to be ``secure''.
Our definition of security is motivated by the required threshold property. 
In general, we will consider a coalition of $k - 1$ players. This coalition
may or may not include $P_{\ell}$. We assume that all players execute the protocol
correctly, but the coalition is trying to obtain some information about the secret.
(Thus we are assuming that the coalition is ``honest-but-curious''.)
After executing the protocol, the coalition combines all the information it 
holds. This includes their shares, as well as all messages that they
send or receive during the protocol. All of this information should still 
yield no information about the secret.  If a $(k,n)$-threshold scheme
has a repairability protocol that satisfies
this security requirement, then we say that it is
a \emph{$(k,n,d)$-repairable threshold scheme}, which we  abbreviate
to \emph{$(k,n,d)$-RTS}.

We distinguish between two types of repairability in this paper.
We will say that an $(n,k,d)$-RTS has \emph{universal repairability} if {\it any} subset 
 of $d$ players can repair a share of any other player. Most 
previous discussions of repairability in the literature have implicitly or explicitly
considered this model. A weaker condition would be to require only that {\it there 
exists} a subset of $d$ players who will be able to repair a given share belonging
to some other player. We will call this  \emph{restricted repairability}.
 
One potential advantage of considering restricted repairability is that
it can lead to more efficient schemes, where efficiency is measured in
terms of information rate (of the threshold scheme) and/or communication complexity
(of the repairing process). This is one of the themes
we explore in this paper.

\subsection{Our Contributions}

We present two repairability schemes in this paper. The first scheme is
a modification of an enrollment protocol due to Nojoumian {\it et al}.\ described in \cite{NSG,Noj}. 
In this scheme, any $k$ users 
are able to repair a share of another user, and the scheme provides
universal repairability. Thus it is a $(k,n,k)$-RTS. 
The underlying threshold scheme is just the Shamir secret sharing scheme,
which is an ideal scheme (i.e., the information rate is equal to 1).

The second scheme provides restricted repairability. 
It combines two schemes and can lead to a solution with
higher information rate and lower communication complexity
(so it trades off larger share sizes for less information communicated during repairing). It 
uses a distribution design having certain properties to allocate 
subsets of shares of a Shamir scheme (or a ramp scheme) to each user.
We look at various types of combinatorial designs that yield
good solutions for repairability when used in this way.

The rest of the paper is organized as follows.
In Section \ref{enrol.sec},
we present the enrollment protocol, modified to provide repairability.
 In Section \ref{GLF.sec}, we give a brief overview of the 
 Guang-Lu-Fu Scheme \cite{GLF}. Section \ref{combin.sec} presents our second scheme,
 which has a somewhat similar flavour. Then, in Section \ref{dist.sec}, we 
 examine various types of distribution designs and the repairable threshold 
 schemes that can be obtained form them. In Section \ref{comp.sec}.
 we compare our construction to the GLF scheme from \cite{GLF}.
 Section \ref{univ.sec} addresses the problem of universal repairability in the combinatorial setting.
 Finally, Section \ref{concl.sec} is a brief conclusion.

\section{NSG Enrollment Protocol}
\label{enrol.sec}

The {\it enrollment protocol} from \cite{Noj,NSG} was introduced to 
create a share for a new player in a threshold scheme, without requiring
the participation of the dealer who initially set up the scheme.
It was also described in a setting where threshold of the scheme 
was to be altered. Here, we discuss a straightforward modification
where the protocol is used to repair a share, without changing
the threshold. This protocol has repairing degree $k$ and achieves universal repairability.

Suppose we have a $({k},n)$-Shamir threshold scheme defined over $\eff_Q$, and we wish to 
repair the share for a player $P_{\ell}$. We assume that this
share is being repaired by players $P_1, \dots , P_{k}$ and ${\ell} > {k}$.
Suppose the share for $P_{\ell}$ is $\varphi_{\ell} = f({\ell})$, where
$f(x) \in \eff_Q[x]$ is a random polynomial of degree at most ${k}-1$ whose
constant term is the secret. The share $\varphi_{\ell}$ can be 
expressed as
\begin{equation}
\label{eq0}
\varphi_{\ell} = \sum_{i=1}^{k} \gamma_i \varphi_i,
\end{equation}
where the $\gamma_i$'s are public Lagrange coefficients
(see, e.g., \cite[\S13.1]{CTAP}).
In what follows, all arithmetic is performed in $\eff_Q$.

The enrollment protocol  proceeds as follows:

\begin{enumerate}
\item For all $1 \leq i \leq {k}$, player $P_i$
computes random values $\delta_{j,i}$ for $1 \leq j \leq {k}$
such that 
\begin{equation}
\label{eq1}
\gamma_i \varphi_i  = \sum_{j=1}^{k} \delta_{j,i}.
\end{equation}

\item For all $1 \leq i \leq {k}$,  $1 \leq j \leq {k}$, 
player $P_i$ transmits $\delta_{j,i}$ to player $P_j$
using a secure channel.

\item For all $1 \leq j \leq {k}$, player $P_j$ computes
\begin{equation}
\label{eq2}
\sigma_j  = \sum_{i=1}^{k} \delta_{j,i}.
\end{equation}

\item For all $1 \leq j \leq {k}$, player  $P_j$ transmits $\sigma_{j}$ to player $P_{\ell}$
using a secure channel.

\item Player $P_{\ell}$ computes their share $\varphi_{\ell}$
using the formula
\begin{equation}
\label{eq3}
\varphi_{\ell} = \sum_{j=1}^{k} \sigma_j.
\end{equation}

\end{enumerate}

It is straightforward to verify that player $P_{\ell}$ computes their share correctly, i.e.,
the value of $\varphi_{\ell}$ computed using (\ref{eq1}), (\ref{eq2}) and (\ref{eq3}) is the
same as (\ref{eq0}).

Let us consider the security of this protocol. We assume that all players act honestly
during the protocol and do not reveal any information while the protocol is being executed.
Later, however, it may be the case that a coalition $\mathcal{C}$ of ${k}-1$ participants 
attempts to compute some information
about the secret. We will show that this is impossible. Note that we are  basically
describing  the security
proof from \cite[\S 2.4.2c]{Noj} with a few additional details added.

First, we note that computing the secret, given ${k}-1$ shares, is equivalent to 
computing any additional share. This is easy to see, because any ${k}$ shares 
allow the secret to be computed, and any ${k}-1$ shares along with the secret allow
any other share to be computed (this is a well-known property of the Shamir scheme).

There are two cases to consider:
\begin{description}
\item [case (i)] The coalition $\mathcal{C}$  consists of a subset of ${k}-1$ players
in $\{P_1, \dots , P_{k}\}$.
\item [case (ii)] The coalition $\mathcal{C}$  consists of $P_{\ell}$ along with a subset of ${k}-2$ players
in $\{P_1, \dots , P_{k}\}$.
\end{description}

It is convenient to consider the following \emph{share-exchange matrix} defined in \cite{Noj}:
\[ \mathcal{E} = \left( 
\begin{array}{cccc}
\delta_{1,1} & \delta_{2,1} & \cdots & \delta_{{k},1} \\
\delta_{1,2} & \delta_{2,2} & \cdots & \delta_{{k},2} \\
\vdots & \vdots & \ddots & \vdots \\
\delta_{1,{k}} & \delta_{2,{k}} & \cdots & \delta_{{k},{k}} .
\end{array}
\right) .\]
Observe from (\ref{eq1}) that the sum of the entries in the $i$th row of $\mathcal{E}$ 
is equal to $\gamma_i \varphi_i$.
Also, from (\ref{eq2}), the sum of the entries in the $j$th column of $\mathcal{E}$ 
is equal to $\sigma_j$, so 
$P_{\ell}$ knows all ${k}$ column sums.
Finally, it is immediate from (\ref{eq1}), (\ref{eq2}) and (\ref{eq3}) 
that the sum of all the entries in $\mathcal{E}$ is equal to $\varphi_{\ell}$.

In case (i), we can assume without loss of generality that $\mathcal{C} = \{P_1, \dots , P_{{k}-1}\}$.
Here the coalition $\mathcal{C}$ possesses all the entries in $\mathcal{E}$ except for 
 $\delta_{{k},{k}}$. But this value is completely random, and knowing this value is equivalent to
 knowing the value of $\varphi_k$, $\varphi_{\ell}$ or the secret. We conclude that $\mathcal{C}$ 
 has no information about the secret in this case.
 
 Case (ii) is a bit more complicated. Here, we can assume without loss of generality 
 that $\mathcal{C} = \{P_1, \dots , P_{{k}-2}, P_{\ell}\}$. The coalition $\mathcal{C}$ possesses all the entries in $\mathcal{E}$ except for the four entries $\delta_{{k}-1,{k}-1}$, $\delta_{{k}-1,{k}}$, 
 $\delta_{{k},{k}-1}$ and $\delta_{{k},{k}}$. 
 Further, since $P_{\ell}$ knows the column sums, the equations 
 \begin{equation}
 \label{S1}
 \sigma_{k-1} = \delta_{{k}-1,{k}-1} + \delta_{{k}-1,{k}}
 \end{equation} and
 \begin{equation}
 \label{S2}\sigma_{k} = \delta_{{k},{k}-1} + \delta_{{k},{k}}
 \end{equation} are known. 
 So we have  two linear equations in four unknowns.
 
 Of course $P_{\ell}$ also knows the value of the share $\varphi_{\ell}$, 
 and $\varphi_{\ell}$ is a known linear combination
 of the ${k}$ shares $\varphi_1, \dots , \varphi_{k}$, as given by (\ref{eq0}). 
 But only the first ${k}-2$ of these shares 
 are known to $\mathcal{C}$.
 
 It is possible to choose arbitrary values for $\delta_{{k}-1,{k}-1}$ and $\delta_{{k},{k}-1}$.
 Thus \[ \varphi_{k-1} = \frac{\delta_{{k}-1,{k}-1} + \delta_{{k},{k-1}}}{\gamma_{k-1}}\] can 
 take on any arbitrary value. Then the values of $\delta_{{k-1},{k}}$ and $\delta_{{k},{k}}$
 (and hence $\varphi_{k}$)
 will be  determined by (\ref{S1}) and (\ref{S2}).

Similarly, we could choose an arbitrary value for $\varphi_{k}$ and then $\varphi_{k-1}$ would be 
determined. In either case, the coalition knows the values of $k-1$ shares, but they have no
information about the individual shares $\varphi_{k-1}$ and $\varphi_{k}$.
 Since this represents all the information available to $\mathcal{C}$, we 
 conclude that $\mathcal{C}$ also
 has no information about the secret in case (ii).

\subsection{Communication Complexity of the Enrollment Protocol}

The {\it communication complexity} of a share repairing scheme is the 
sum of the sizes (i.e., the bit-lengths) of all the messages transmitted during the protocol
divided by the bit-length of the secret. In the enrollment protocol, 
every message is an element of $\eff_Q$, as is the secret. 
Therefore, 
%For a $(k,n)$-threshold scheme over $\eff_Q$, each player's share 
%is an element of $\eff_Q$ (so the information rate $\rho = 1$).
the communication complexity is equal to the total number of messages transmitted. 
It is computed as follows:
\begin{itemize}
\item $k(k-1)$ in step 2,
\item $k$ in step 4, and
\item therefore the total is  $k^2$.
\end{itemize}

\subsection{Ramp Scheme Repairability}

The same protocol works in the case of a ramp scheme. Here we need $k_2$ players
to reconstruct a lost secret. The same Lagrange formula applies in this situation,
since a share is just an evaluation of the polynomial at a particular point.
The security proof  needs to be modified to consider security against coalitions of
$k_1$ players. As was the situation in analyzing the threshold 
scheme, there are two cases to consider:
\begin{description}
\item [case (i)] The coalition $\mathcal{C}$  consists of a subset of ${k_1}$ players
in $\{P_1, \dots , P_{k_2}\}$.
\item [case (ii)] The coalition $\mathcal{C}$  consists of $P_{\ell}$ along with a subset of ${k_1}-1$ players
in $\{P_1, \dots , P_{k_2}\}$.
\end{description}
We briefly outline the proof in the two cases.

In case (i), we can assume without loss of generality that $\mathcal{C} = \{P_1, \dots , P_{{k_1}}\}$.
The coalition $\mathcal{C}$ possesses all the entries in the share-exchange matrix $\mathcal{E}$ except for 
the $\lambda$ by $\lambda$ lower right submatrix of $\mathcal{E}$
(where $\lambda = k_2 - k_1$). 
The entries of this submatrix can be filled in such that they are consistent with 
any possible values of the $\lambda$ shares
$\varphi_{k_1+1}, \dots ,  \varphi_{k_2}$. Therefore, the secret is completely undetermined.

In case (ii), we assume  that $\mathcal{C} = \{P_1, \dots , P_{{k_1-1}}, P_{\ell}\}$.
Then $\mathcal{C}$ possesses all the entries in  $\mathcal{E}$ except for 
the $\lambda +1$ by $\lambda+1$ lower right submatrix of $\mathcal{E}$.
The coalition also knows the value of $\varphi_{\ell}$ as well as the column sums 
$\sigma_{k_1}, \dots , \sigma_{k_2}$. 
Any $\lambda$ rows of this submatrix can be filled in with arbitrary values,
which means that the $\lambda$ corresponding shares can take on arbitrary values.
The values in the remaining row of the submatrix are then determined by the known
column sums, which means that the share corresponding to this row is determined.
So the information available to the coalition consists of $k_2$ known shares, and
it is consistent with any possible values of any $\lambda$ additional shares.
So the coalition has no information about the secret.

In conclusion, we have shown that $\mathcal{C}$ 
has no information about the secret in either of the two cases.

\section{Guang-Lu-Fu (GLF) Scheme}
\label{GLF.sec}

The GLF scheme, described in \cite{GLF}, has a lower information rate
than the enrollment scheme, but also lower
communication complexity. As such, it achieves a tradeoff between these two measures. 
The GLF scheme provides universal repairability and it is
based on linearized polynomials and minimum
bandwidth regeneration (MBR) codes \cite{DGWWR}. 
We do not discuss the scheme in detail, but we will refer to
its basic properties where it is relevant to do so.

We recall one example from \cite{GLF} to illustrate the basic idea.
Example 2 from \cite{GLF} is
a $(2,4)$-threshold scheme with information rate $1/3$.
The secret is an element over $\mathbb{F}_Q$ and each share is a triple
over $\mathbb{F}_Q$. The repairing degree $d=3$. 
Repairing a player works as follows.
Each of three players send one message to the fourth player, where a message is
an element of $\mathbb{F}_Q$. The three messages received enable the three
components of the share to be reconstructed. 
For this scheme, we would say that the total communication
complexity is 3. This is an improvement over the communication
complexity (which is equal to 4) using the enrollment scheme for a $(2,4)$-threshold
scheme. 
%Note that communication complexity will always be expressed relative to
%the size of the secret.

\section{A New Technique for Combinatorial Repairability}
\label{combin.sec}

In this section, we present a $(k,n)$-threshold scheme
with low information rate and communication complexity that achieves
restricted repairability. We base our construction on an old technique,
namely giving each player a subset of shares from an underlying threshold
scheme\footnote{This technique has most commonly been considered in the past 
in connection with the construction of secret sharing schemes
for non-threshold access structures; see, e.g., \cite[Theorem 1]{BL}.}.
We will start with an $(\ell,m)$-threshold scheme, say a Shamir scheme,
implemented over a finite field $\eff_Q$. This is called the 
{\it base scheme}. We then give each 
player a certain subset of $d$ of the $m$ shares. A {\it design} consisting
of $n$ blocks of size $d$, defined on a set of $m$ points, will
be used to do this. This design is termed the {\it distribution design.}
The repairing degree will be equal to $d$.

We will call the shares of the base
$(\ell,m)$-threshold scheme {\it subshares}. Each share in the resulting
$(k,n)$-threshold scheme consists of $d$ subshares.
We need to ensure that the threshold
property is satisfied for the resulting
$(k,n)$-threshold scheme, which we call the {\it expanded scheme}. 
We also need to be able to repair the share of any player in the expanded scheme
by judiciously choosing a certain set of other players, who will then send
appropriate subshares to the player whose share is being repaired.

Let the blocks in the distribution design be denoted $B_1, \dots , B_n$ and let $X$
denote the set of $m$ points. The threshold property will be satisfied
in the expanded scheme 
provided that the following two conditions are satisfied:
\begin{enumerate}
\item the union of any $k$ blocks contains at least $\ell$ points, and
\item the union of any $k-1$ blocks contains at most $\ell -1$ points.
\end{enumerate}
 
We are considering a repairing scheme where certain designated players transmit subshares
to the player whose share is being repaired. 
This technique can be applied provided that every point in the distribution design
occurs in at least two blocks (this is a necessary and sufficient condition for
this kind of repairability to be possible). Therefore, if this property is
satisfied, we say that the distribution design is {\it repairable}. 

Suppose we want to repair the share 
corresponding to a block $B$. For each point $x \in B$, we can find another block
that contains $x$ (because the distribution design is repairable). 
The corresponding player can send the subshare corresponding to $x$
to the player whose node is being repaired. 
The communication complexity of the expanded scheme will be equal to $d$, 
since $d$ elements of $\eff_Q$ are transmitted to repair a share of a secret in $\eff_Q$.

It is not a requirement that the $d$ subshares are obtained from $d$ different 
blocks. For example, it could happen that $d=3$, one block contributes two
subshares, and one block contributes one subshare during the repairing process.
However, we will frequently be considering schemes where we have $d$ blocks, 
each of which contributes one subshare. This is analogous to  the model from \cite{GLF},
where it is   assumed
that each player contributes a constant number $\beta$ of ``elements'' to the player 
whose share is begin repaired (where an ``element'' is a subshare or a certain linear
combination of subshares).

It is quite simple to analyze the security of combinatorial repairability. 
The main point to observe is that the information collectively held by any subset of players (after the 
repairing protocol is completed) consists only of their shares in the expanded scheme.
They did not obtain any information collectively that they did not already possess
before the execution of the repairing protocol.
So it is immediate that a set $k-1$ players cannot compute the secret after the
repairing of a share occurs.

\subsection{Using Ramp Schemes as Base Schemes}

We have one additional useful modification to describe.
Suppose that the distribution design satisfies the following two properties.
\begin{enumerate}
\item the union of any $k$ blocks contains at least $\ell_2$ points, and
\item the union of any $k-1$ blocks contains at most $\ell_1$ points,
\end{enumerate}
where $\ell_2 - \ell_1 \geq 1$. In this case we say that the distribution design
is a {\it $(k,\ell_1,\ell_2)$-distribution design}. See Table \ref{tab1} for
a summary of the parameters and required properties of a distribution design.

\begin{table}
\caption{Parameters and properties of a $(k,\ell_1,\ell_2)$-distribution design}
\label{tab1}
\begin{center}
\begin{tabular}{|l|l|}
\hline
$m$ & number of points in the design\\
$n$ & number of blocks in the design ($=$ the number of players)\\
$d$ & block size ($=$ the repairing degree)\\
$k$ & threshold\\
$\ell_1$ & maximum number of points in the union of $k-1$ blocks\\
$\ell_2$ & minimum number of points in the union of $k$ blocks\\ \hline
\end{tabular}
\end{center}
\end{table}

Given a $(k,\ell_1,\ell_2)$-distribution design, we let the base scheme be an
$(\ell_1,\ell_2,m)$-ramp scheme\footnote{Note that, if $\ell_2 - \ell_1 = 1$, then the ramp scheme is
a threshold scheme, and we have the construction  described  in the
previous section.} defined over $\eff_Q$ (this can be done if
$Q \geq m+1$). 
 Then we use the distribution design to distribute
shares to the $n$ players.
This yields a $(k,n)$-threshold scheme (the expanded scheme) having information 
rate $(\ell_2 - \ell_1)/d$.
%If $\ell_2 - \ell_1 = 2$, then then the resulting  information rate
%is an improvement over the threshold scheme.

Repairing works exactly as before, and $d$ subshares, each of which is an element of 
$\eff_Q$, are transmitted to repair a share. However, the secret is now an
element in $(\eff_Q)^{\ell_2 - \ell_1}$, so the 
communication complexity is now $d/ (\ell_2 - \ell_1)$.
(Note that this is just the reciprocal of the information rate of the expanded scheme.) 

\begin{theorem}
\label{thm1}
Suppose there exists a repairable $(k,\ell_1,\ell_2)$-distribution design on $m$ points, having
$n$ blocks of size $d$, and suppose that  $Q \geq m+1$. Then there is a
$(k,n,d)$-RTS  with restricted repairability, having information rate $(\ell_2 - \ell_1)/d$ and
communication complexity $d/ (\ell_2 - \ell_1)$, where every share is in $(\eff_Q)^d$.
\end{theorem}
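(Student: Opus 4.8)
The plan is to assemble Theorem~\ref{thm1} directly from the machinery already developed in this section, since essentially every claim in the statement corresponds to a fact established in the preceding discussion. I would organize the proof as a verification that each of the four assertions---the threshold property, restricted repairability, the information rate, and the communication complexity---follows from the hypotheses on the distribution design together with the base ramp scheme.

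First I would fix the construction: given the repairable $(k,\ell_1,\ell_2)$-distribution design with blocks $B_1,\dots,B_n$ of size $d$ on an $m$-point set $X$, take an $(\ell_1,\ell_2,m)$-ramp scheme over $\eff_Q$ (which exists since $Q \geq m+1$), and give player $P_i$ the subshares indexed by the points in $B_i$. Since each block has size $d$, each player holds exactly $d$ subshares, so every share lies in $(\eff_Q)^d$, as claimed. The bulk of the argument is then to check the threshold property of the expanded scheme. For reconstruction, any $k$ players hold the subshares indexed by the union of their $k$ blocks, which by property~(1) of a $(k,\ell_1,\ell_2)$-distribution design contains at least $\ell_2$ points; since the base ramp scheme has upper threshold $\ell_2$, those $\ell_2$ subshares suffice to recover the secret. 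For secrecy, any $k-1$ players hold subshares indexed by a union of at most $\ell_1$ points by property~(2); since the base scheme has lower threshold $\ell_1$, these reveal nothing about the secret. This establishes that the expanded scheme is a $(k,n)$-threshold scheme.

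Next I would address repairability and its security. The repairing procedure is exactly the one described above: to repair the share for block $B$, for each point $x \in B$ the repairability of the design guarantees a second block containing $x$, whose player transmits the subshare for $x$; collecting all $d$ such subshares reconstitutes the share for $B$. This gives restricted repairability with repairing degree $d$. Security is immediate from the earlier observation that no coalition gains any information from the repairing protocol that it did not already hold as shares---so the security reduces to the secrecy property of the expanded scheme already verified, confirming that we indeed have a $(k,n,d)$-RTS. Finally, the information rate is $(\ell_2-\ell_1)/d$ because the secret is an element of $(\eff_Q)^{\ell_2-\ell_1}$ while each share lies in $(\eff_Q)^d$, and the communication complexity is the reciprocal $d/(\ell_2-\ell_1)$ since $d$ elements of $\eff_Q$ are transmitted to repair a secret of size $\ell_2-\ell_1$ over $\eff_Q$; both were derived in the discussion preceding the statement.

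Since all the ingredients are already in place, I do not anticipate a serious technical obstacle. The one point requiring care is to state precisely why a union of at most $\ell_1$ points yields no information and a union of at least $\ell_2$ points yields the secret---this is where the match between the thresholds $(\ell_1,\ell_2)$ of the base ramp scheme and the design parameters $(\ell_1,\ell_2)$ is essential, and one should note that a subset of more than $\ell_1$ but fewer than $\ell_2$ subshares is never actually held by a $(k-1)$- or $k$-subset of players, so the intermediate ``partial information'' regime of the ramp scheme never interferes with the clean threshold behaviour of the expanded scheme.
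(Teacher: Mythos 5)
Your proposal is correct and takes essentially the same approach as the paper: the paper offers no separate formal proof of Theorem~\ref{thm1}, since the theorem is a summary of the construction and analysis in the preceding discussion (the base $(\ell_1,\ell_2,m)$-ramp scheme, the allocation of subshares via the blocks, the threshold verification from the union properties, point-by-point repairing via a second block, and the observation that repairing conveys no information beyond the shares themselves), and your write-up assembles exactly these ingredients into a verification. Your closing remark---that coalitions of size $k-1$ or $k$ never hold a number of subshares in the ``partial information'' range strictly between $\ell_1$ and $\ell_2$---is a correct and worthwhile point of added precision.
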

 
Suppose we have a $(k,\ell_1,\ell_2)$-distribution design on $n$ blocks 
in which every point occurs in at least two blocks, as required in Theorem 
\ref{thm1}. If we take an arbitrary subset of the blocks of such a design,
then it may not be the case that every point occurs in at least two blocks
of the ``smaller'' design. It would be convenient to have a simple method
of selecting subsets of blocks of a design in such a way that this 
property continues to be satisfied. 

Here is the approach we will use to achieve this objective.
We say that a subset of $s$ blocks in a $(k,\ell_1,\ell_2)$-distribution design on $n$ blocks
is a {\it basic repairing set of size $s$} if every point in the design
is contained in at least two blocks in the basic repairing set.
It is obvious that any superset of a basic repairing set 
is repairable. So we have the following result.

\begin{theorem}
\label{thm2}
Suppose there exists a $(k,\ell_1,\ell_2)$-distribution design on $m$ points, having
$b$ blocks of size $d$, and suppose that  $Q \geq m+1$. Suppose that this design
contains a basic repairing set of size $s$. Then, for any $n$ such that 
$s \leq n \leq b$,  there is a  
$(k,n,d)$-RTS   with restricted repairability, having information rate $(\ell_2 - \ell_1)/d$ and
communication complexity $d/ (\ell_2 - \ell_1)$, where every share is an element of $(\eff_Q)^d$.
\end{theorem}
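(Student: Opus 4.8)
The plan is to reduce Theorem~\ref{thm2} directly to Theorem~\ref{thm1}, since essentially all the technical work has already been done there. The key observation is that the hypotheses of Theorem~\ref{thm2} give us a $(k,\ell_1,\ell_2)$-distribution design $\mathcal{D}$ on $b$ blocks together with a distinguished basic repairing set $\mathcal{R}$ of size $s$, and we simply need to extract, for each target value $n$ with $s \leq n \leq b$, a repairable $(k,\ell_1,\ell_2)$-distribution design on exactly $n$ blocks to which Theorem~\ref{thm1} can be applied verbatim.

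First I would fix $n$ with $s \leq n \leq b$ and construct the sub-design $\mathcal{D}_n$ as follows: take all $s$ blocks of the basic repairing set $\mathcal{R}$, and then adjoin any $n - s$ additional blocks chosen from the remaining $b - s$ blocks of $\mathcal{D}$ (this is possible precisely because $n \leq b$). The resulting collection has exactly $n$ blocks of size $d$, defined on the same point set $X$ (of size $m$). Second, I would verify that $\mathcal{D}_n$ is still a $(k,\ell_1,\ell_2)$-distribution design: the two defining properties concern the union of any $k$ blocks (at least $\ell_2$ points) and the union of any $k-1$ blocks (at most $\ell_1$ points). Both properties are inherited by any sub-collection of blocks, since every set of $k$ (respectively $k-1$) blocks of $\mathcal{D}_n$ is also a set of $k$ (respectively $k-1$) blocks of $\mathcal{D}$, for which the bounds already hold. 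So the distribution-design property passes to $\mathcal{D}_n$ without any additional argument.

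Third, and this is the one step that requires the basic repairing set hypothesis, I would check that $\mathcal{D}_n$ is \emph{repairable}, i.e.\ that every point of $X$ lies in at least two blocks of $\mathcal{D}_n$. This is exactly where the construction pays off: since $\mathcal{R} \subseteq \mathcal{D}_n$ and $\mathcal{R}$ is a basic repairing set, every point is already contained in at least two blocks of $\mathcal{R}$, hence in at least two blocks of the larger collection $\mathcal{D}_n$. This is the observation recorded just before the statement (``any superset of a basic repairing set is repairable''), and it is the crux of why insisting on a basic repairing set, rather than merely requiring the full design to be repairable, lets us shrink down to any $n \geq s$.

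Finally, having shown that $\mathcal{D}_n$ is a repairable $(k,\ell_1,\ell_2)$-distribution design on $m$ points with $n$ blocks of size $d$, and since $Q \geq m+1$ by hypothesis, I would simply invoke Theorem~\ref{thm1} applied to $\mathcal{D}_n$. That theorem produces a $(k,n,d)$-RTS with restricted repairability, information rate $(\ell_2-\ell_1)/d$, and communication complexity $d/(\ell_2-\ell_1)$, with every share an element of $(\eff_Q)^d$ --- which is precisely the conclusion sought. I do not anticipate a genuine obstacle here; the only thing to be careful about is confirming that the inheritance of the design properties and the repairability condition are truly monotone under passing to subsets and supersets respectively, but both are immediate from the definitions.
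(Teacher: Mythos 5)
Your proposal is correct and follows essentially the same route as the paper: the paper proves Theorem~\ref{thm2} by exactly the observation that any superset of a basic repairing set is repairable, so one selects the $s$ blocks of the basic repairing set plus any $n-s$ further blocks and applies Theorem~\ref{thm1}. Your write-up merely makes explicit the (immediate) inheritance of the $(k,\ell_1,\ell_2)$-distribution design properties under passing to sub-collections of blocks, which the paper leaves implicit.
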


\section{Some Distribution Designs and the Resulting RTS}
\label{dist.sec}

 In this section, we provide some examples of distribution designs and
 describe how they can be used to construct repairable secret sharing schemes.
 The designs we use are Steiner triple systems, resolvable $(m,d,1)$-BIBDs and projective planes. 

\subsection{Steiner Triple Systems}

We first consider using a Steiner triple system as a distribution design.
This only allows certain thresholds, but the number of players 
can take on a large range of values.
A {\it Steiner triple system of order $m$} 
(or, STS$(m)$) has $m$ points and $b= m(m-1)/6$ blocks of
size $3$, and every pair of points occurs in exactly one block.
An STS$(m)$ can also be defined as an $(m,3,1)$-BIBD 
(balanced incomplete 
block design).
For a comprehensive  reference on Steiner triple systems, see \cite{CR}.

Using the blocks of an STS$(m)$ 
as a distribution design would yield repairing degree $d = 3$.
The simplest application would be to take $k=2$.
The union of any two blocks in the design contains at least five points, 
and each block contains
three points. Hence we can take $\ell_1 = 3$, $\ell_2 = 5$ and use 
a $(3,5,m)$-ramp scheme as the base scheme. The expanded scheme will be 
a $(2,n,3)$-RTS having information rate $2/3$ and communication complexity is $3/2$.
This is certainly an  
improved communication complexity as compared to the enrollment protocol
with threshold $k=2$, which has communication complexity $4$. 

We still need to determine the permissible values of $n$ in the above construction.
It will be advantageous to make use of resolvable STS$(m)$. An STS$(m)$ is {\it resolvable}
if the set of $b= m(m-1)/6$ blocks can be partitioned into $b= (m-1)/2$ {\it parallel classes},
where each parallel class consists of $m/3$ disjoint blocks. It is well-known that
a {\it resolvable} STS$(m)$ exists if and only if $m\equiv 3 \pmod{6}$.

Suppose we use a resolvable STS$(m)$ as our distribution design.
Then two parallel classes in this
design comprise a basic repairing set of size  $2m/3$.
As a result, we can accommodate any number $n$ of players such that
$2m/3 \leq n \leq m(m-1)/6$. We have proved the following theorem.

\begin{theorem}
\label{STS.thm}
Suppose $m\equiv 3 \pmod{6}$, $Q$ is a prime power such that $Q \geq m+1$ and 
$2m/3 \leq n \leq m(m-1)/6$. Then there exists a 
$(2,n,3)$-RTS with restricted repairability, with shares from $(\eff_Q)^3$, having information rate $2/3$ and
communication complexity
$3/2$.
\end{theorem}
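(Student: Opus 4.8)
The plan is to apply Theorem~\ref{thm2} with the specific parameters coming from a resolvable Steiner triple system, so the bulk of the work is verifying that a resolvable $\mathrm{STS}(m)$ is a $(2,3,5)$-distribution design that contains a basic repairing set of size $2m/3$. First I would recall the standard existence result: a resolvable $\mathrm{STS}(m)$ exists if and only if $m \equiv 3 \pmod 6$, which supplies the hypothesis on $m$ and gives us $b = m(m-1)/6$ blocks of size $d=3$ partitioned into $(m-1)/2$ parallel classes, each consisting of $m/3$ pairwise disjoint blocks. Since the blocks have size $3$, we set $d=3$ throughout.

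Next I would verify the two distribution-design inequalities for $k=2$. The union of any single block ($k-1=1$ block) contains exactly $3$ points, so we may take $\ell_1 = 3$ as the maximum number of points in the union of $k-1$ blocks. For the lower bound, I would argue that any two distinct blocks of an $\mathrm{STS}(m)$ meet in at most one point: two blocks sharing two points would force a pair of points to lie in two different blocks, contradicting the defining property that every pair occurs in exactly one block. Hence the union of any two blocks contains at least $3+3-1 = 5$ points, and we may take $\ell_2 = 5$. This establishes that the resolvable $\mathrm{STS}(m)$ is a $(2,3,5)$-distribution design, whence $\ell_2 - \ell_1 = 2$.

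The key step is to exhibit a basic repairing set of size $2m/3$. I would take the union of two distinct parallel classes; since each class has $m/3$ disjoint blocks, this union has $2m/3$ blocks. A resolution partitions the point set within each parallel class, so every one of the $m$ points appears in exactly one block of the first class and exactly one block of the second class. Therefore every point lies in at least two blocks of this chosen set, which is precisely the definition of a basic repairing set of size $s = 2m/3$. I expect this to be the cleanest part of the argument, as it relies only on the defining property of resolvability rather than on any counting.

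Finally I would invoke Theorem~\ref{thm2} with $\ell_1=3$, $\ell_2=5$, $d=3$, $b = m(m-1)/6$ and $s = 2m/3$: for any $n$ with $2m/3 \le n \le m(m-1)/6$ and any prime power $Q \ge m+1$, the theorem yields a $(2,n,3)$-RTS with restricted repairability, shares in $(\eff_Q)^3$, information rate $(\ell_2-\ell_1)/d = 2/3$, and communication complexity $d/(\ell_2-\ell_1) = 3/2$, exactly as claimed. The main obstacle, if any, is the verification that two blocks of a Steiner triple system meet in at most one point (giving $\ell_2 = 5$); everything else is a direct substitution into the already-proved Theorem~\ref{thm2}, so I would present that intersection argument carefully and treat the rest as routine bookkeeping.
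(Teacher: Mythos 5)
Your proposal is correct and follows essentially the same route as the paper: take a resolvable STS$(m)$ as a $(2,3,5)$-distribution design, use two parallel classes as a basic repairing set of size $2m/3$, and apply Theorem~\ref{thm2}. The only difference is that you spell out details the paper leaves implicit (the two-block intersection argument giving $\ell_2=5$ and the verification that two parallel classes cover every point twice), which is fine.
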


\begin{example}
The smallest interesting application of Theorem \ref{STS.thm} is when $m = 9$.
The distribution design is a resolvable STS$(9)$, consisting of  four parallel classes of
three blocks. We take two parallel classes to form the basic repairing sent, 
along with an arbitrary subset of the remaining six blocks.
In this way, we can construct a $(2,n,3)$-RTS 
for any $n$ such that $6 \leq n \leq 12$. The scheme has information rate $2/3$ and
communication complexity
$3/2$.  Subshares are elements of $\eff_Q$, where
$Q\geq 11$ is any prime power, and the secret is an element of $(\eff_Q)^2$.
Shares consist of three elements of $\eff_Q$.
\end{example}

\subsubsection{Quadrilateral-free STS}

What if we use an STS to try to construct a scheme with a higher threshold?
The union of two blocks contains at most six points (and equality is achieved if the two blocks are
disjoint). However, it is easy to find sets of three blocks whose union
contains six points (e.g., three blocks of the form $xyz$, $xuv$, $uyw$).
Even four blocks might have a union consisting of six points:
$xyz$, $xuv$, $uyw$, $vzw$. Such a set of four blocks is
known as a {\it quadrilateral} or {\it Pasch configuration}.
However, it is possible to construct Steiner triple systems that do not
contain any Pasch configurations. These designs are termed 
{\it anti-Pasch} Steiner triple systems. An anti-Pasch Steiner triple system
exists for any order $m\equiv 1,3 \pmod{6}$, $m\neq 7,13$ (see \cite{GGW}).

In an anti-Pasch Steiner triple system, the union of two blocks contain 
at most six points, and the union of four blocks contain at least seven points.
Therefore, the expanded scheme is a $(2,4,n$)-ramp scheme. So we have weakened
the desired threshold property in the expanded scheme,
but we still might get something interesting if
we can identify a small repairing set. In fact, infinite classes of resolvable
anti-Pasch Steiner triple systems are known. 
For example, in \cite{CCL}, it is shown that a resolvable
anti-Pasch Steiner triple system of order $m$ exists for any 
positive integer $m \equiv 9 \bmod 18$. We can use any two parallel classes of the 
design as a basic repairing set, as we did in Theorem \ref{STS.thm}.

\subsection{BIBDs with $\lambda = 1$}

Using the blocks of an $(m,4,1)$-BIBD  
as a distribution design would yield a scheme with repair degree $d = 4$.
We have the following result.

\begin{theorem}
\label{m41.thm}
Suppose $m\equiv 4 \pmod{12}$, $Q$ is a prime power such that $Q \geq m+1$ and 
$m/2 \leq n \leq m(m-1)/12$. Then there exists a 
$(2,n,4)$-RTS with restricted repairability, with shares from $(\eff_Q)^4$, 
having information rate $3/4$ and
communication complexity
$4/3$.
\end{theorem}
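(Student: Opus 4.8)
The plan is to follow the template of Theorem~\ref{STS.thm} almost verbatim, replacing the Steiner triple system by a resolvable $(m,4,1)$-BIBD. The key existence fact I would invoke is the classical result of Hanani (and Ray-Chaudhuri--Wilson) that a resolvable $(m,4,1)$-BIBD exists if and only if $m \equiv 4 \pmod{12}$. Such a design has $b = m(m-1)/12$ blocks of size $d = 4$, and its block set partitions into $(m-1)/3$ parallel classes, each comprising $m/4$ pairwise disjoint blocks that cover every point exactly once. The hypothesis $m \equiv 4 \pmod{12}$ guarantees that $m/4$, $(m-1)/3$, and $b$ are all integers, so the parameters are consistent.

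First, I would confirm that this design is a $(2,\ell_1,\ell_2)$-distribution design with $\ell_1 = 4$ and $\ell_2 = 7$. Since $k=2$, the relevant unions are of a single block (exactly $4$ points, so $\ell_1 = 4$) and of any two blocks. Here the defining property $\lambda = 1$ does the work: two distinct blocks can share at most one point, for otherwise a common pair would occur in two blocks. Hence any two blocks cover at least $4 + 4 - 1 = 7$ points, so $\ell_2 = 7$ and $\ell_2 - \ell_1 = 3$. Theorem~\ref{thm1} then delivers information rate $(\ell_2-\ell_1)/d = 3/4$ and communication complexity $d/(\ell_2-\ell_1) = 4/3$, with shares in $(\eff_Q)^4$ whenever $Q \geq m+1$.

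Second, I would exhibit a small basic repairing set. Taking any two parallel classes of the resolvable design, every point lies in exactly one block of each class, and hence in exactly two of the chosen blocks. This is a basic repairing set of size $s = 2(m/4) = m/2$. Applying Theorem~\ref{thm2} with $b = m(m-1)/12$ and $s = m/2$ then yields a $(2,n,4)$-RTS with restricted repairability for every $n$ in the stated range $m/2 \leq n \leq m(m-1)/12$, which is exactly the claim.

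The only nonroutine ingredient is the existence of resolvable $(m,4,1)$-BIBDs, and that is a cited theorem rather than something to be established here; the two combinatorial verifications (that $\ell_2 = 7$, and that two parallel classes form a basic repairing set) are immediate consequences of $\lambda = 1$ and resolvability, respectively. So I do not anticipate any genuine obstacle: the argument is a direct transcription of the STS proof with block size $3$ replaced by $4$, the main structural difference being that the $\lambda=1$ intersection bound now gives $\ell_2 = 7$ in place of $\ell_2 = 5$.
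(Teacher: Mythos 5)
Your proposal is correct and follows essentially the same route as the paper's proof: invoke the existence of a resolvable $(m,4,1)$-BIBD for $m \equiv 4 \pmod{12}$, take $\ell_1 = 4$ and $\ell_2 = 7$ (so the base scheme is a $(4,7,m)$-ramp scheme), and use two parallel classes as a basic repairing set of size $m/2$, then apply Theorems \ref{thm1} and \ref{thm2}. The only difference is that you spell out the $\lambda = 1$ intersection argument for $\ell_2 = 7$ and name the classical existence theorem, both of which the paper leaves implicit.
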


\begin{proof}
If $m \equiv 4 \pmod{12}$, then there is a resolvable $(m,4,1)$-BIBD.
The union of any two blocks in a $(m,4,1)$-BIBD  contains at least seven points, 
and each block contains
four points. Hence we can take $k=2$, $\ell_1 = 4$ and $\ell_2 = 7$, and use 
a $(4,7,m)$-ramp scheme as the base scheme. The expanded scheme will be 
a $(2,n,4)$-RTS having information rate $3/4$ and repair degree $4$. 
The communication complexity is $4/3$.

%We still need to determine the permissible values of $n$.
Two parallel classes in the
BIBD comprise a basic repairing set of size  $m/2$.
As a result, we can accommodate any value of $n$ such that
$m/2 \leq n \leq m(m-1)/12$.
\end{proof}

As mentioned before, the enrollment protocol
with threshold $k=2$ has communication complexity equal to $4$, so the communication complexity is
lowered quite considerably in Theorem \ref{m41.thm}.

Using the same idea, we can use other known classes of resolvable $(m,d,1)$-BIBDs to construct 
repairable threshold schemes. When $d$ increases, the
threshold may also increase. We illustrate by
stating results for the  cases $d=5$ and $d=8$.
The proofs are similar to Theorem \ref{STS.thm} and \ref{m41.thm}.

\begin{theorem}
\label{m51.thm}
Suppose $m\equiv 5 \pmod{20}$ and there exists a resolvable $(m,5,1)$-BIBD.
Let $Q$ be a prime power such that $Q \geq m+1$ and 
$2m/5 \leq n \leq m(m-1)/20$. Then the following RTS exist:
\begin{enumerate}
\item A 
$(2,n,5)$-RTS with restricted repairability, with shares from $(\eff_Q)^5$, 
having information rate $4/5$ and
communication complexity
$5/4$.
\item A 
$(3,n,5)$-RTS with restricted repairability, with shares from $(\eff_Q)^5$, 
having information rate $2/5$ and
communication complexity
$5/2$.
\end{enumerate}
\end{theorem}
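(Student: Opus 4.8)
Theorem \ref{m51.thm} concerns a resolvable $(m,5,1)$-BIBD used as a distribution design. Let me analyze the two cases.

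The plan is to apply Theorem~\ref{thm2} twice to the same resolvable $(m,5,1)$-BIBD, once for each threshold; the design and its resolvability are fixed, and only $k$, $\ell_1$, and $\ell_2$ change between the two parts. I would use throughout the two defining features of an $(m,5,1)$-BIBD: every block has exactly five points, and any two distinct blocks meet in at most one point (since $\lambda=1$). A resolvable design of this type has $m(m-1)/20$ blocks partitioned into parallel classes of $m/5$ disjoint blocks, so any two parallel classes cover every point at least twice and hence form a basic repairing set of size $2m/5$. Feeding this into Theorem~\ref{thm2} gives the block range $2m/5 \le n \le m(m-1)/20$ in both cases simultaneously.

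For the case $k=2$, the union of $k-1=1$ block is exactly five points, so $\ell_1=5$, while two blocks meet in at most one point and therefore have union at least $5+5-1=9$, giving $\ell_2=9$. Since $\ell_2-\ell_1 = 4 \ge 1$, Theorem~\ref{thm2} with a $(5,9,m)$-ramp scheme as base scheme delivers a $(2,n,5)$-RTS of information rate $(\ell_2-\ell_1)/d = 4/5$ and communication complexity $d/(\ell_2-\ell_1) = 5/4$, with shares in $(\eff_Q)^5$ whenever $Q \ge m+1$.

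For the case $k=3$, the parameter $\ell_1$ is the maximum union of two blocks, which is $10$, attained by any two disjoint blocks, so $\ell_1=10$. The quantity $\ell_2$, the minimum union of three blocks, is where the real work lies, and I expect it to be the main obstacle. I would compute it by inclusion--exclusion: the union of $B_1,B_2,B_3$ equals $15 - (|B_1\cap B_2|+|B_1\cap B_3|+|B_2\cap B_3|) + |B_1\cap B_2\cap B_3|$, and since each pairwise intersection is at most one while the triple intersection is nonnegative, the union is at least $15-3+0=12$. The care needed is in checking that maximizing the subtracted pairwise terms and minimizing the added triple term is jointly realizable: the minimizing configuration is a ``triangle'' of three blocks meeting pairwise in three distinct points with empty common intersection, whose union is exactly $12$, so $\ell_2=12$ is correct and cannot be raised. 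With $\ell_2-\ell_1 = 2 \ge 1$ and a $(10,12,m)$-ramp scheme as base, Theorem~\ref{thm2} yields a $(3,n,5)$-RTS of information rate $2/5$ and communication complexity $5/2$, as claimed.
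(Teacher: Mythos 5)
Your proposal is correct and follows essentially the same route as the paper: two parallel classes of the resolvable $(m,5,1)$-BIBD form a basic repairing set of size $2m/5$, and Theorem~\ref{thm2} is applied with $(\ell_1,\ell_2)=(5,9)$ for $k=2$ and $(10,12)$ for $k=3$, exactly the parameters the paper states. Your inclusion--exclusion justification of the three-block bound (and the tightness remark about the triangle configuration) simply fills in details the paper dismisses as ``straightforward.''
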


\begin{proof}
The verifications are straightforward. 
We note that the union of two blocks in the BIBD contains either nine or ten points, 
and the union of three blocks in the design contains at least 12 points.
So we can take $\ell_1 = 5$ and $\ell_2 = 9$ when $k=2$, and
$\ell_1 = 10$ and $\ell_2 = 12$ when $k=3$.
\end{proof} 

The first few values of $m$ for which Theorem \ref{m51.thm} can be applied 
are $m =  25, 65$ and $85$. Actually, resolvable $(m,5,1)$-BIBDs
are known to exist for all $m \equiv 5 \bmod 20$ except $m = 45, 345,465,645$ (see \cite{AGY}).

We state the following similar result without proof.

\begin{theorem}
\label{m81.thm}
Suppose $m\equiv 8 \pmod{56}$ and there exists a resolvable $(m,8,1)$-BIBD.
Let $Q$ be a prime power such that $Q \geq m+1$ and 
$m/4 \leq n \leq m(m-1)/56$. Then the following RTS exist:
\begin{enumerate}
\item A 
$(2,n,8)$-RTS with restricted repairability, with shares from $(\eff_Q)^5$, 
having information rate $7/8$ and
communication complexity
$8/7$.
\item A 
$(3,n,8)$-RTS with restricted repairability, with shares from $(\eff_Q)^5$, 
having information rate $5/8$ and
communication complexity
$8/5$.
\item A 
$(4,n,8)$-RTS with restricted repairability, with shares from $(\eff_Q)^5$, 
having information rate $1/4$ and
communication complexity
$4$.
\end{enumerate}
\end{theorem}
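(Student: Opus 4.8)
The plan is to follow the template already established in Theorems \ref{STS.thm}, \ref{m41.thm} and \ref{m51.thm}: for each threshold $k \in \{2,3,4\}$ I would exhibit the correct distribution-design parameters $(\ell_1,\ell_2)$, identify a basic repairing set, and then invoke Theorem \ref{thm2}. By hypothesis a resolvable $(m,8,1)$-BIBD exists; it has block size $d=8$, exactly $b = m(m-1)/56$ blocks, and $(m-1)/7$ parallel classes, each a partition of the point set into $m/8$ pairwise disjoint blocks. The single structural fact I will use repeatedly is that, since $\lambda = 1$, any two distinct points lie in a unique block, and hence any two distinct blocks meet in at most one point (otherwise a pair of points would lie in two blocks).

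For the upper threshold parameter $\ell_1$ I would use only the trivial bound that a union of $k-1$ blocks contains at most $8(k-1)$ points, so I may take $\ell_1 = 8(k-1)$. The one step needing genuine care is the lower bound on the union of $k$ blocks, which fixes $\ell_2$. For this I would run the standard double-counting argument: writing $r_x$ for the number of the chosen $k$ blocks containing a point $x$, one has $\sum_x r_x = 8k$ and $\sum_x \binom{r_x}{2} = \sum_{i<j} |B_i \cap B_j| \le \binom{k}{2}$, where the last inequality is exactly where the $\lambda = 1$ property enters. Since $\binom{r_x}{2} \ge r_x - 1$ whenever $r_x \ge 1$, summing over the $U$ points of the union gives $8k - U \le \binom{k}{2}$, hence $U \ge 8k - \binom{k}{2}$, and I may take $\ell_2 = 8k - \binom{k}{2}$.

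With these choices $\ell_2 - \ell_1 = 8 - \binom{k}{2}$, so the BIBD is a genuine $(k,\ell_1,\ell_2)$-distribution design precisely when $\binom{k}{2} < 8$, i.e. for $k \le 4$; this is exactly what forces the threshold to stop at $k=4$, since $\binom{5}{2} = 10 > 8$. Substituting $k=2,3,4$ yields $(\ell_1,\ell_2) = (8,15),\,(16,21),\,(24,26)$, giving information rates $(\ell_2-\ell_1)/8 = 7/8,\,5/8,\,1/4$ and communication complexities $8/(\ell_2-\ell_1) = 8/7,\,8/5,\,4$, matching the three claimed cases, with shares lying in $(\eff_Q)^8$. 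For the basic repairing set I would take any two parallel classes: each point occurs once in each class and therefore exactly twice in their union, so the repairing condition holds and the set has size $s = 2(m/8) = m/4$. Applying Theorem \ref{thm2} with this $s$ and $b = m(m-1)/56$ then produces the required RTS for every $n$ with $m/4 \le n \le m(m-1)/56$.

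In short, the result presents no serious obstacle once Theorem \ref{thm2} is available; the only substantive step is the union lower bound for $\ell_2$ and, correspondingly, verifying that the inequality $\ell_2 > \ell_1$ holds for $k = 2,3,4$ but fails at $k=5$, which is what pins down the admissible range of thresholds.
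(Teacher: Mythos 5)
Your proof is correct and follows exactly the template the paper intends (the paper states Theorem \ref{m81.thm} without proof, deferring to the arguments of Theorems \ref{STS.thm}, \ref{m41.thm} and \ref{m51.thm}): take $\ell_1 = 8(k-1)$ and $\ell_2 = 8k - \binom{k}{2}$, use two parallel classes as a basic repairing set of size $m/4$, and invoke Theorem \ref{thm2}; your double-counting derivation of the union lower bound ($\sum_x r_x = 8k$, $\sum_x \binom{r_x}{2} \le \binom{k}{2}$, $\binom{r_x}{2} \ge r_x - 1$) is a clean, uniform justification of the bounds that the paper's analogous proofs merely assert case by case, and it also explains why the construction stops at $k=4$. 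Note that you correctly place the shares in $(\eff_Q)^8$; the ``$(\eff_Q)^5$'' appearing in the theorem statement is evidently a typo carried over from Theorem \ref{m51.thm}, since the block size here is $d=8$.
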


The first few values of $m$ for which Theorem \ref{m81.thm} can be applied 
are $m =  64$ and $120$. Another known result is that resolvable $(m,8,1)$-BIBDs
exist for all $m \equiv 8 \bmod 56$, $m > 24480$ (see \cite{AGY}).

\subsection{Projective Planes}

Finally, we examine the possibility of using finite projective planes as 
distribution designs. A \emph{projective plane of order $q$} is a design 
consisting of $m = q^2+q+1$ points and $q^2+q+1$ blocks (or lines), where each block contains 
exactly $d=q+1$ points and every pair of points occurs in exactly one block.
It follows that every point occurs in exactly $q+1$ blocks and any pair of
blocks intersect in exactly one point. 

For basic results on projective planes, see \cite{Sti}.
It is well-known that a projective plane of order $q$ exists
whenever $q$ is a prime or prime power. In this case, we can let the one-dimensional
subspaces of $(\mathbb{F}_q)^3$ be points and  define the two-dimensional
subspaces of $(\mathbb{F}_q)^3$ to be blocks. The result is a 
projective plane of order $q$ known as PG$(2,q)$.

We will use a certain subset of the blocks of the projective plane
as our distribution design. The permissible values of $n$ will be 
determined by the repairability requirement.
%There are some restrictions on $n$, as we will see later.

First, we consider the minimum and maximum number of points spanned by a set of
$j$ blocks. These values will determine the parameters of the base scheme.

\begin{lemma}
\label{lemma1}
The union of any $j-1$ blocks in a projective plane of order $q$ contains
at most $q(j-1)+1$ points.
\end{lemma}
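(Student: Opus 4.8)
The plan is to prove this by induction on the number of blocks, which I will denote $t = j-1$ for convenience. The single structural fact I need is the defining incidence property of a projective plane: every block contains exactly $q+1$ points, and any two distinct blocks meet in exactly one point. The target bound $q(j-1)+1 = qt+1$ has a suggestive form — it is $(q+1)$ for one block and grows by exactly $q$ for each additional block — so an inductive argument that tracks how many \emph{new} points each added block can contribute looks like the most natural route.

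For the base case $t=1$, a single block contains precisely $q+1 = q\cdot 1 + 1$ points, so equality holds. For the inductive step, I would assume that the union of any $t-1$ blocks contains at most $q(t-1)+1$ points, and then consider adding one further block $B$ to such a collection. The key observation is that $B$ must intersect each of the previously chosen blocks, and in particular it meets at least one of them; since that intersection point lies in the existing union, at least one of the $q+1$ points of $B$ is already counted. Hence $B$ contributes at most $(q+1) - 1 = q$ points not already present. Combining this with the inductive hypothesis yields a union of size at most $q(t-1)+1 + q = qt+1$, completing the induction.

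I do not expect a serious obstacle here; the only point requiring care is justifying that the newly added block shares at least one point with the current union, and this is immediate because any two distinct blocks of a projective plane intersect (so $B$ meets whichever block was added first). It is worth remarking that the bound is tight: it is attained by any collection of $t$ blocks in ``general position'' (no three concurrent), in which case each new block meets the union in exactly one point. If a cleaner non-inductive phrasing is preferred, one could instead argue directly that fixing the first block and noting that every subsequent block meets the accumulated union gives the same telescoping count, but the induction above is the most transparent way to present it.
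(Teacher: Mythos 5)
Your proof is correct and is essentially the paper's argument: the paper gives precisely the non-inductive phrasing you mention at the end, anchoring on one block $A_0$ (with $q+1$ points) and observing that each of the remaining $j-2$ blocks meets $A_0$, hence contributes at most $q$ new points, yielding $q+1+(j-2)q = q(j-1)+1$; your induction is just the telescoped version of this same count. One correction to your closing tightness remark: the bound is \emph{not} attained by blocks in general position --- there a newly added block meets the union in one point per previously chosen block, so $t$ such blocks cover only $t(q+1)-\binom{t}{2}$ points (compare Lemma~\ref{lemma2}); equality $qt+1$ holds exactly for a pencil of $t$ concurrent blocks, where all the pairwise intersection points coincide.
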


\begin{proof}
Denote the $j-1$ blocks by $A_0, \dots , A_{j-2}$. Each $A_i$ ($i \geq 1$) contains a point in 
$A_0$, so \[ \left| \bigcup _{i=0}^{j-2} A_i \right| \leq q+1 + (j-2)q = q(j-1)+1.\]
\end{proof}

\begin{lemma} 
\label{lemma2}
For $j \leq q+1$, the union of any $j$ blocks in a projective plane of order $q$ contains
at least $j(q+1 - (j-1)/2)$ points.
\end{lemma}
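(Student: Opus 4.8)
The plan is to build the union one block at a time and bound the number of new points contributed at each step. I would label the $j$ blocks as $A_1, \dots, A_j$ in any order and track the quantity $|A_1 \cup \cdots \cup A_i|$ as $i$ increases from $1$ to $j$.

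The key projective-plane property I would invoke is that any two distinct blocks meet in exactly one point. So when I adjoin $A_i$ to the union $A_1 \cup \cdots \cup A_{i-1}$, the points of $A_i$ that are already present all lie in $\bigcup_{i' < i}(A_i \cap A_{i'})$. Since each intersection $A_i \cap A_{i'}$ consists of a single point, this set has at most $i-1$ points. As $A_i$ contains $q+1$ points in total, it therefore contributes at least $(q+1) - (i-1)$ new points to the union.

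Summing these per-block lower bounds gives
\[
|A_1 \cup \cdots \cup A_j| \;\geq\; \sum_{i=1}^{j}\bigl[(q+1)-(i-1)\bigr] \;=\; j(q+1) - \binom{j}{2} \;=\; j\!\left(q+1 - \frac{j-1}{2}\right),
\]
which is exactly the claimed bound.

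The one place I would be careful is the role of the hypothesis $j \leq q+1$. For the per-block estimate $(q+1)-(i-1)$ to be a genuine (nonnegative) lower bound on the number of new points, I need the overlap count $i-1$ not to exceed the block size $q+1$; otherwise a block could lie entirely within the union of its predecessors and contribute $0$ rather than a negative amount, breaking the summation. Since $i \leq j \leq q+1$ forces $i-1 \leq q < q+1$, every block genuinely adds at least one new point, so no term in the sum is spuriously negative. This is really the only subtlety; the remainder is a clean incremental counting argument with no serious obstacle.
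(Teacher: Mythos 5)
Your proof is correct and follows essentially the same route as the paper: build the union block by block, use the fact that two distinct lines meet in exactly one point to bound the overlap of the $i$-th block with its predecessors by $i-1$, and sum the per-block contributions $(q+1)-(i-1)$ to get $j(q+1)-\binom{j}{2}$. Your closing remark about $j \leq q+1$ is harmless but not actually needed for validity (a negative term would still be a valid lower bound on a nonnegative count); the hypothesis merely keeps the stated bound from being vacuous.
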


\begin{proof}
Denote the $j$ blocks by $A_0, \dots , A_{j-1}$. Each $A_i$ (for $1 \leq i \leq q$) 
contains $q+1- i$ points that are not in $\cup_{h=0}^{i-1} A_h$.
It follows that 
\[ \left| \bigcup_{i=0}^{j-1} A_i \right| \geq \sum _{i=0}^{j-1} (q+1 -i) = j(q+1) - \frac{j(j-1)}{2}.\]
\end{proof}

For repairability, we  determine the existence of some good basic repairing sets.
In general, a basic repairing set of size $s$ 
is equivalent to the \emph{dual of a $2$-blocking set} on $s$ points.
Blocking sets in projective planes have been studied by several authors
and various bounds on the minimum size of a blocking set are known
(see, e.g., Ball and Blokhuis \cite{BB}).
One simple (and well-known) construction is to  
choose any three noncollinear points $x$, $y$ and $z$ of the projective plane,
and take all the blocks that contain at least one of these points. 
This yields a basic repairing set of size $3q$.

Here is a well-known construction that sometimes yields basic repairing sets of size 
$s < 3q$. Suppose that $q$ is a square of a prime power. Start with two
disjoint Baer subplanes in PG$(2,q)$ and take all the blocks that contain
a line from either of these two subplanes. There are $2(q + \sqrt{q} + 1)$
such blocks, and  every point in PG$(2,q)$ is contained in at least two of these
blocks. So we have a basic repairing set of size $2(q + \sqrt{q} + 1)$ 
in this case, which is
an improvement asymptotically over the previous construction. 
(However, $q=9$ is the first value that actually yields a smaller basic repairing set
than the ``simple'' construction.)

Table \ref{tab-plane} contains some examples of repairable threshold schemes 
using projective planes as distribution designs. 
We consider various values of $q$ and $k$. The values of $\ell_1$ and $\ell_2$ are
obtained from Lemmas \ref{lemma1} and \ref{lemma2}. For every $n$
such that $s \leq n \leq m = q^2+q+1$, there is a 
$(k,n,q+1)$-RTS having information rate $\rho$ and
communication complexity $1/\rho$.

\begin{table}
\caption{$(n,k,d)$-RTS based on projective planes}
\label{tab-plane}
\begin{center}
\begin{tabular}{cc}
$
\begin{array}{|r|r|r|r|r|c|c|}
\hline
q & d & k & \ell_1 & \ell_2 & n & \rho\\ \hline
3 & 4 & 2 & 4 & 7 & 9 \leq n \leq 13 & 3/4\\
  &   & 3 & 7 & 9 &   &    1/2\\ \hline
4 & 5 & 2 & 5 & 9 & 12 \leq n \leq 21 & 4/5\\
  &   & 3 & 9 & 12 &      & 3/5\\
  &   & 4 & 13 & 14 &      & 1/5\\ \hline
5 & 6 & 2 & 6 & 11 & 15 \leq n \leq 31 & 5/6\\
  &   & 3 & 11 & 15 &      & 2/3\\
  &   & 4 & 16 & 18 &      & 1/3\\
 \hline
\end{array}
$
&
\begin{tabular}{l}
$q=$  order of projective plane\\
$d=$  repairing degree\\
$k=$  threshold\\
$n=$  number of players\\
$\ell_1, \ell_2$ are ramp scheme thresholds\\
$\rho= $  information rate of the scheme
\end{tabular}
\end{tabular}
\end{center}
\end{table}

\section{Comparison with the GLF Scheme}
\label{comp.sec}

We are able to obtain substantially improved information rates
as compared with the GLF scheme from \cite{GLF}.
They prove an upper bound on the information rate of the schemes they construct that
have {\it optimal repairing rate}.
Optimal repairing rate means that the information received by the user whose share is being repaired
has the same size as a share. Our combinatorial schemes also have this 
feature, so a direct comparison is relevant. The bound obtained in 
\cite{GLF} has the form
\begin{equation}
\label{GLFbound} \rho \leq \frac{k(2d-k+1)}{2dt},
\end{equation}
where 
$t$ is given by the formula 
\begin{equation}
\label{t-eq}
t = \sum_{i=0}^{k-1} \min \{\alpha, (d-i)\beta \} .
\end{equation}
In (\ref{t-eq}), $\alpha$ denotes the number of elements of $\eff_Q$ 
in a share, and each user sends $\beta$ elements of $\eff_Q$ to a user whose 
node is being repaired. Therefore, in our scheme, we have
$\alpha = d$, $\beta = 1$, and hence, from (\ref{t-eq}), we have 
\begin{equation}
\label{t-eq-2}
t = \sum_{i=0}^{k-1} (d-i) = kd - \frac{k(k-1)}{2} .
\end{equation}
Substituting (\ref{t-eq-2}) into (\ref{GLFbound}),
we obtain 
\begin{equation}
\label{GLFbound2} \rho \leq \frac{k(2d-k+1)}{2d\left(kd - \frac{k(k-1)}{2} \right)}
= \frac{2d-k+1}{2d\left(d - \frac{(k-1)}{2} \right)}.
\end{equation}

We illustrate with a couple of examples.

\begin{example} Suppose $k=2$, $d=3$. Then (\ref{GLFbound2}) results in 
$\rho \leq 1/3$. On the other hand, we are able to achieve $\rho = 2/3$ 
for certain values of $n$.
\end{example}

\begin{example} Suppose $k=3$, $d=4$. Then (\ref{GLFbound2}) results in 
$\rho \leq 1/4$. However, we are able to achieve $\rho = 3/4$ 
in certain situations.
\end{example}

We can also compare the communication complexity of our schemes to the 
GLF scheme \cite{GLF}. It is easy to see that the GLF scheme always 
has communication complexity that is at least $d$. On the other hand,
our schemes, as presented in Theorems \ref{thm1} and \ref{thm2},
always have communication complexity that is at most $d$.
(Of course, we also require a suitable distribution design to exist
in order to apply our results.)

\section{Universal Repairability}
\label{univ.sec}

In this section, we consider possible ways to achieve
universal repairability in the combinatorial setting we have introduced.

\subsection{Dual Hypergraph of a Complete Graph}

The first examples of distribution designs for universal repairability 
that we consider allow 
various thresholds, but the number of
players is constrained. The distribution designs are just the dual hypergraphs 
of  complete graphs.  For a positive integer $n$, let $K_{n}$ denote
the complete graph on $n$ vertices. The points of our distribution design
will be the $\binom{n}{2}$ edges of $K_{n}$. For each vertex $x$ of $K_n$, 
we define a block $B_x = \{ e : x \in e\}$. Thus there are $n$ blocks in the design,
each of size $n-1$. Any two blocks intersect in exactly one point, and every point 
occurs in exactly two blocks. The following lemma is proved by a simple counting argument.

\begin{lemma}
\label{Kn} Suppose $1 \leq j \leq n$. The union of any $j$ blocks in the above-described design has cardinality 
$j(n-1) - \binom{j}{2}$.
\end{lemma}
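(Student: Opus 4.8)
The plan is to count directly the number of points (edges) that belong to the union of $j$ chosen blocks, namely the edges of $K_n$ incident to at least one of the $j$ chosen vertices. The key structural feature that makes this routine is already recorded in the lemma's setup: every point lies in \emph{exactly} two blocks, and any two blocks meet in exactly one point. The first consequence is that any intersection of three or more blocks is empty, since an edge has only two endpoints and so cannot lie in three distinct $B_x$.

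First I would fix vertices $x_1, \dots, x_j$ and expand $\left| \bigcup_{i=1}^j B_{x_i} \right|$ by inclusion--exclusion. Because each block $B_{x_i}$ has size $n-1$, the single-block terms contribute $j(n-1)$. Because each pairwise intersection $B_{x_i} \cap B_{x_{i'}}$ is the single edge $\{x_i, x_{i'}\}$, the pairwise terms contribute exactly $\binom{j}{2}$. All higher-order terms vanish by the observation above, so the inclusion--exclusion sum terminates after two terms and yields $j(n-1) - \binom{j}{2}$ immediately.

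As an independent check (and an alternative proof), I would partition the edges incident to $\{x_1, \dots, x_j\}$ into the $\binom{j}{2}$ edges lying entirely within the chosen set and the $j(n-j)$ edges joining a chosen vertex to one of the remaining $n-j$ vertices; summing $\binom{j}{2} + j(n-j)$ and simplifying recovers the same count. There is essentially no obstacle in either argument: the only points requiring a word of care are justifying that inclusion--exclusion truncates (no point lies in three blocks) and confirming that the boundary values $j=1$ (giving $n-1$) and $j=n$ (giving $\binom{n}{2}$, the whole point set) are both consistent with the stated formula.
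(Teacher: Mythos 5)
Your proof is correct, and it supplies exactly the ``simple counting argument'' that the paper alludes to but does not write out: either the truncated inclusion--exclusion (valid since no edge lies in three blocks) or the direct partition into $\binom{j}{2}$ internal edges plus $j(n-j)$ crossing edges gives the stated count. Both arguments are sound, and your boundary checks at $j=1$ and $j=n$ confirm the formula.
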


From Lemma \ref{Kn}, for $2 \leq k \leq n$, it follows that the design is a $(k,\ell_1,\ell_2)$-distribution design
on $\binom{n}{2}$ points, where
\[ \ell_1 = (k-1)(n-1) - \binom{k-1}{2}\]
and \[ \ell_2 = k(n-1) - \binom{k}{2}.\]
The design itself constitutes a basic repairing set since every point occurs in 
exactly two blocks. 

We have the following corollary of Theorem \ref{thm2}.

\begin{theorem}
\label{thm-Kn}
Suppose that $n \geq 3$ and $2 \leq k \leq n$.
%Suppose there exists a $(k,\ell_1,\ell_2)$-distribution design on $m = \binom{n}{2}$ points, having
%$b$ blocks of size $d$, and 
Denote $m = \binom{n}{2}$ and suppose that  $Q \geq m+1$. Then,  there is a  
$(k,n,n-1)$-RTS with universal repairability, having information rate $(n-k)/(n-1)$ and
communication complexity $(n-1)/ (n-k)$, where every share is an element of $(\eff_Q)^{n-1}$.
\end{theorem}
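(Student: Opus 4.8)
The plan is to realize this as an essentially immediate corollary of Theorem~\ref{thm2}, with the genuine work concentrated on upgrading restricted repairability to universal repairability. First I would invoke Lemma~\ref{Kn} to pin down the parameters of the base ramp scheme: taking $j = k$ and $j = k-1$ shows that the union of any $k$ blocks has size exactly $\ell_2 = k(n-1) - \binom{k}{2}$ and the union of any $k-1$ blocks has size exactly $\ell_1 = (k-1)(n-1) - \binom{k-1}{2}$. Because these cardinalities are exact rather than mere bounds, they satisfy the two defining conditions of a $(k,\ell_1,\ell_2)$-distribution design, and a one-line computation gives $\ell_2 - \ell_1 = (n-1) - (k-1) = n-k$, which is positive precisely when $k < n$.

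Next I would observe that every point of the design, that is, every edge $\{x,y\}$ of $K_n$, lies in exactly two blocks, namely $B_x$ and $B_y$. Consequently the full collection of $n$ blocks is itself a basic repairing set of size $s = n$, and since the design has $b = n$ blocks in total we have $s = b = n$. Theorem~\ref{thm2} then applies with the unique admissible value $n$ for the number of players, producing a $(k,n,n-1)$-RTS with shares in $(\eff_Q)^{n-1}$, information rate $(\ell_2-\ell_1)/d = (n-k)/(n-1)$, and communication complexity $(n-1)/(n-k)$. This already supplies every quantitative claim in the statement; each is a direct substitution into Theorem~\ref{thm2}.

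The remaining and principal task is to strengthen restricted repairability to universal repairability, which Theorem~\ref{thm2} does not deliver on its own. Here the constraint $d = n-1$ is exactly what makes the argument go through: a repairing set of $d = n-1$ players that excludes the player $P_\ell$ being repaired must consist of \emph{all} $n-1$ remaining players, so for each $\ell$ there is only one candidate repairing set and it suffices to show it succeeds. I would argue this directly from the two-blocks-per-point structure: the block $B_\ell$ to be repaired consists of the edges $\{\ell,z\}$ for $z \neq \ell$, and each such edge lies in precisely the blocks $B_\ell$ and $B_z$. Hence the unique other block containing the point $\{\ell,z\}$ is $B_z$, held by player $P_z$, and as $z$ ranges over the $n-1$ vertices different from $\ell$, each of the $n-1$ subshares of $B_\ell$ is supplied by exactly one distinct remaining player. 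Thus the set of all other players repairs $P_\ell$, and since $\ell$ was arbitrary the scheme has universal repairability.

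The step I expect to be the main obstacle is precisely this last one: everything quantitative reduces to Theorem~\ref{thm2}, but verifying universal rather than merely restricted repairability rests on the combinatorial observation that, because each point occurs in exactly two blocks, the subshares needed to rebuild any single block are automatically distributed one per player among all the other players. I would also flag the degenerate boundary case $k = n$, where $\ell_2 - \ell_1 = 0$ and the information rate collapses to $0$; this should be excluded or noted as trivial rather than treated as a genuine scheme, so that the interesting range is $2 \leq k \leq n-1$.
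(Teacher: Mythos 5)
Your proposal is correct and follows essentially the same route as the paper: Lemma~\ref{Kn} yields the $(k,\ell_1,\ell_2)$-distribution design parameters, the fact that every point lies in exactly two blocks makes the whole design a basic repairing set so Theorem~\ref{thm2} applies, and universal repairability follows because with $d = n-1$ there is only one candidate set of $d$ players for each repair --- which is precisely the one-line observation the paper's proof makes (your extra direct verification that this unique set succeeds is redundant given restricted repairability, but harmless). Your flag about the degenerate case $k = n$, where $\ell_2 - \ell_1 = 0$ and the design no longer meets the requirement $\ell_2 - \ell_1 \geq 1$, is a fair point that the paper's statement glosses over.
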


 \begin{proof} The only observation we need to make is that universal repairability
 and restricted repairability are equivalent when $d = n-1$, since there is only one possible
 set of $d$ players to consider when repairing a given share.
 \end{proof}
 
\subsection{Universal Repairability and $1$-designs}

Suppose the distribution design is a \emph{$(v,b,r,d)$-1-design}. This means that
we have $v$ points, each of which occurs in $r$ blocks, and $b$ blocks in total,
each of which contains $d$ points. We are going to focus on the repairability
property in this section; we do not concern ourselves with the specific 
thresholds that can be achieved.

\begin{theorem}
\label{universal}
A $(v,b,r,d)$-1-design provides universal repairability if and only if 
$b < r+d$.
\end{theorem}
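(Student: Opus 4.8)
The plan is to translate the repairability requirement into a covering statement about blocks and then settle both directions of the equivalence with a single pigeonhole count on the blocks through a point.

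First I would unwind what ``universal repairability'' means for a distribution design whose repairing degree equals the block size $d$. To repair the share attached to a block $B$, the $d$ players forming a chosen repairing set $S$ (a $d$-subset of the remaining $b-1$ blocks, so $B \notin S$) must between them supply a subshare for every point of $B$; as described in Section \ref{combin.sec}, the player holding block $B'$ can supply the subshare for any point of $B' \cap B$. Hence $S$ repairs $B$ if and only if $B \subseteq \bigcup_{B' \in S} B'$, i.e.\ every point of $B$ lies in some block of $S$. Universal repairability therefore amounts to the assertion that \emph{for every block $B$ and every $d$-subset $S$ of the other blocks, the union of the blocks in $S$ contains $B$.}

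Next I would reduce this block-covering condition to a point condition. Universal repairability fails precisely when there is a block $B$, a point $x \in B$, and a set $S$ of $d$ blocks, none equal to $B$, such that no block of $S$ contains $x$ (an uncovered point). In a $(v,b,r,d)$-$1$-design each point $x$ lies in exactly $r$ blocks, so exactly $b-r$ blocks avoid $x$. Because the repaired block $B$ contains $x$, it is automatically not among those $b-r$ avoiding blocks, so the side constraint $B \notin S$ costs nothing. Thus a failing configuration exists for some point $x$ if and only if one can choose $d$ distinct $x$-avoiding blocks, i.e.\ if and only if $b - r \geq d$.

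Finally both directions follow by pigeonhole. If $b < r+d$, then for every point $x$ at most $b - r \leq d-1$ blocks avoid $x$, so any $d$ blocks must include one through $x$; applying this to each point of the block being repaired shows that every $d$-set covers it, giving universal repairability. Conversely, if $b \geq r+d$, fix any point $x$ (using $r \geq 1$ so that some block $B \ni x$ exists), select $d$ of the $b-r \geq d$ blocks missing $x$ to form $S$, and note that $S$ leaves $x \in B$ uncovered, so $S$ cannot repair $B$. I expect the only genuinely delicate step to be the first one --- correctly identifying that the repairability requirement is exactly the covering condition $B \subseteq \bigcup_{B' \in S} B'$, and verifying that the restriction $B \notin S$ imposes nothing extra --- since once the problem is recast as counting $x$-avoiding blocks, the inequality $b < r+d$ drops out immediately.
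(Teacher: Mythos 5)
Your proof is correct and takes essentially the same approach as the paper: both directions reduce to counting the $b-r$ blocks avoiding a given point $x$ of the block being repaired, with repairability identified as the covering condition $B \subseteq \bigcup_{B' \in S} B'$ (not requiring one subshare per block). Your explicit remark that the constraint $B \notin S$ costs nothing, since $B$ itself contains $x$, is handled implicitly in the paper but is the same observation.
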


\begin{proof}
Suppose we have a $(v,b,r,d)$-1-design in which $b \geq r + d$.
Suppose $B$ is a block that we want to repair. Let $x \in B$. There
are $r$ blocks that contain $x$, one of which is $B$. Choose any $d$ blocks
that do  not contain $x$ (this can be done because $b-r \geq d$). Then these
$d$ blocks cannot repair the block $B$, since none of these blocks 
contain $x$. 

Conversely, suppose have a $(v,b,r,d)$-1-design in which $b < r + d$.
Let $B$ be a block and let $B_1, \dots B_d$ be any other $d$ blocks.
Then every point $x \in B$ is contained in at least one of these $d$ blocks.
Thus, \[B \subseteq \bigcup _{i=1}^{d} B_i.\]
It follows that the $d$ given blocks are sufficient to repair $B$
(we do not require that each block contributes one subshare, so it is sufficient 
that $B$ is covered by the union of the $d$ blocks).
\end{proof}

The dual hypergraph of the complete graph $K_n$ (as considered in 
the previous section) is
an $(\binom{n}{2},n,2,n-1)$-1-design. Since $n < 2 + (n-1)$, the universal 
repairability property also follows from Theorem \ref{universal}.

Another class of designs that provide universal repairability
are the complements of Hadamard designs.
These are $(4t+3,2t+2,t+2)$-BIBDs and they exist for all $t$ 
such that a Hadamard matrix of order $4t+4$ exists.
We just need to observe that such a BIBD is a $(4t+3,4t+3,2t+2,2t+2)$-1-design.
Since $4t+3 < (2t+2) + (2t+2)$, Theorem \ref{universal}
guarantees that the repairability property holds.

 \section{Summary and Conclusion}
 \label{concl.sec}
 
 We have presented two methods for repairing secrets in threshold schemes.
 The first method is a simple modification of the enrollment protocol
 and the second method is based on using a suitable combinatorial design
 to distribute ``subshares'' of a threshold or ramp scheme. Our schemes
 provide improved information rates and/or communication complexity 
 as compared to previously known schemes.

\section*{Acknowledgements}

The first author would like to thank Nabiha Asghar and Charlie Colbourn 
for helpful comments.

\end{document}